\newtheorem{thm}{Theorem}[section]
\newtheorem{prob}{Problem}
\newtheorem{lem}[thm]{Lemma}
\newtheorem{conj}{Conjecture}
\theoremstyle{definition}
\begin{document}
\title{\bf A note on the spectral radius and $[a,b]$-factor of graphs\footnote{This work was supported by the National Natural Science Foundation of China (Grant Nos. 12271162,
12301454), Natural Science Foundation of Shanghai (Nos. 22ZR1416300 and 23JC1401500) and the Program for Professor of Special Appointment (Eastern Scholar) at Shanghai Institutions of Higher Learning (No. TP2022031),}\setcounter{footnote}{-1}\footnote{\emph{Email address:} ddfan0526@163.com (D. Fan), huiqiulin@126.com (H. Lin), zyf1218@sina.cn (Y. Zhu).}}
\author{Dandan Fan$^{a}$,  Huiqiu Lin$^b$\thanks{Corresponding author.},  Yinfen Zhu$^{b,c}$\\[2mm]
\small\it $^a$ School of Mathematics and Physics, Xinjiang Agricultural University\\
\small\it Urumqi, Xinjiang 830052, China\\[1mm]
\small\it $^b$ School of Mathematics, East China University of Science and Technology, \\
\small\it   Shanghai 200237, China \\[1mm]
\small\it $^c$ School of Mathematics and Physics, Xinjiang Institute of Engineering,\\
\small\it Urumqi, Xinjiang 830023, China}
\date{}
\maketitle
{\flushleft\large\bf Abstract}
The investigation of eigenvalue conditions for the existence of an $[a,b]$-factor originates in the work of Brouwer and Haemers (2005) on perfect matchings.
In the decades since, spectral extremal problems related to $[a,b]$-factors have attracted considerable attention.
In this paper, we establish a spectral radius condition that ensures the existence of an $[a,b]$-factor in a graph $G$ with minimum degree $\delta(G) \geq a$, where $b > a \geq 1$.
This result resolves a problem posed by Hao and Li [Electron. J. Combin. (2024)].

	\begin{flushleft}
		\textbf{Keywords:} $[a,b]$-factor; Perfect matching; Spectral radius.
	\end{flushleft}
	\textbf{AMS Classification:} 05C50
	
	\section{Introduction}	
An \textit{$[a,b]$-factor} of a graph $G$ is a spanning subgraph $H$ such that $a\leq d_{H}(v)\leq b$ for each $v\in V(G)$. The study of factors from graph eigenvalues has a rich history. A 1-factor is a perfect matching of graphs.
In 2005, Brouwer and Haemers \cite{A.B} described a regular graph to contain a perfect matching in terms of the third largest adjacency eigenvalue, which was improved in \cite{S.C,S.C-1,S.C-2}. Subsequently, O \cite{S.O} extended the result of Brouwer and Haemers to graphs that are not necessarily regular, and provided a spectral condition to guarantee the existence of a perfect matching in a graph. A $[k,k]$-factor is a $k$-factor in graphs. In 1891, Petersen \cite{Petersen1891} demonstrated that every $2r$-regular graph possesses a $2k$-factor for $0 \leq k \leq r$. One might naturally ask what the spectral condition is that guarantees the existence of a
$k$-factor in a general graph. Based on the Tutte's $k$-Factor Theorem (see \cite{Tutte1952}), Lu \cite{Lu2012} gave an answer to this question in terms of the third largest adjacency eigenvalue. By considering the condition of edge-connectedness, Gu \cite{Gu2014} studied eigenvalue conditions of $t$-edge-connected regular graphs with $k$-factors. Up to now, the problem of spectral conditions for the existence of an $[a,b]$-factor when $a$ and $b$ take special values has been well studied, such as, odd $[1,b]$-factors \cite{Kim-O-Park-Ree, LM22, LWY10}, $[1,b]$-factors \cite{Fan-Lin-Lu} and $[a,b]$-parity factors \cite{FLA24,Kim-O2023,O2022}. For more results, we refer the readers to \cite {FLA25, FL24, HLY25}.

For a graph $G$, let $A(G)$ denote  the \textit{adjacency matrix} of $G$. The largest eigenvalue of $A(G)$, denoted by $\rho(G)$, is called the \textit{spectral radius} of $G$. Let $G_{1} \vee G_{2}$ be the graph obtained from the disjoint union $G_{1}\cup G_{2}$ by adding all edges between $G_{1}$ and $G_{2}$.
Among graphs of order $n$ without an $[a, b]$-factor, Cho, Hyun, O and Park \cite{E.C} guessed that the graph $K_{a-1}\vee(K_{1}\cup K_{n-a})$ attains the maximum spectral radius, and they proposed the following conjecture.

\begin{conj}[See \cite{E.C}]\label{conj::1.1}
Let $a\cdot n$ be an even integer at least 2, where $n\geq a+1$. If $G$ is a graph of order $n$ with $$\rho(G)>\rho(K_{a-1}\vee(K_{1}\cup K_{n-a})),$$
 then $G$ contains an $[a,b]$-factor.
\end{conj}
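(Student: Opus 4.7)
My plan is to combine Lov\'asz's $(g,f)$-factor theorem with a structural--spectral analysis of an extremal counterexample. Since $b>a\geq 1$, specializing Lov\'asz's theorem to $g\equiv a$, $f\equiv b$ yields the characterization: $G$ has an $[a,b]$-factor if and only if
$$\varepsilon_G(S,T):=b|S|+\sum_{v\in T}d_{G-S}(v)-a|T|\geq 0$$
for every pair of disjoint subsets $S,T\subseteq V(G)$. The parity/odd-component term present in Tutte's $k$-factor theorem vanishes because $g<f$ pointwise, so no component of $G-(S\cup T)$ can be ``odd'' in the $(g,f)$-sense. Moreover, $K_{a-1}\vee(K_1\cup K_{n-a})$ contains a vertex $v_0$ of degree $a-1<a$, so $(\emptyset,\{v_0\})$ gives $\varepsilon_G=-1$ and this graph admits no $[a,b]$-factor; the conjectured bound is therefore tight.

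\textbf{Step 2 (Structure of a counterexample).} Assume for contradiction that $G$ has order $n$, $\rho(G)>\rho(K_{a-1}\vee(K_1\cup K_{n-a}))$, and $G$ has no $[a,b]$-factor. Take $G$ to be edge-maximal with these properties, and choose a pair $(S,T)$ witnessing $\varepsilon_G(S,T)\leq -1$ with $|S|+|T|$ minimum. Standard edge-addition arguments (any missing edge must, upon addition, create an $[a,b]$-factor; re-examining the Lov\'asz criterion after each addition constrains where edges can be absent) force a rigid structure: $G[S]\cong K_{|S|}$, $S$ is completely joined to $V(G)\setminus S$, every component of $G-(S\cup T)$ is a clique, and $T$ is completely joined to $V(G)\setminus(S\cup T)$. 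Thus $G=K_{|S|}\vee H$ for some $H$ on $n-|S|$ vertices whose structure is determined by $|T|$ and the sizes of the clique components of $G-(S\cup T)$.

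\textbf{Step 3 (Spectral comparison).} With this structure in hand, the partition $\{S,T,C_1,\dots,C_r\}$ (where the $C_i$ are the clique components of $G-(S\cup T)$) is equitable or near-equitable, and its quotient matrix furnishes an upper bound for $\rho(G)$ depending only on $|S|,|T|,|C_1|,\dots,|C_r|$. The goal is to show that, over all triples $(|S|,|T|,\{|C_i|\})$ compatible with $\varepsilon_G(S,T)\leq -1$ and with $|S|+|T|+\sum|C_i|=n$, this bound is maximized precisely at $|S|=a-1$, $|T|=0$, $r=2$, $|C_1|=1$, $|C_2|=n-a$, producing the conjectured extremal graph and the sought contradiction. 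The cleanest route is monotonicity: iteratively merge two smaller components into one, absorb $T$ into $S$ where permissible, or trade $|T|$ for $|S|$, while verifying that each operation preserves $\varepsilon\leq -1$ and strictly increases $\rho$ by Perron--Frobenius, and then compare only the endpoint configurations.

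The principal obstacle is Step~3: the case analysis over $(|S|,|T|,r)$ and the $|C_i|$ is intricate, and each reduction step must be shown to strictly increase $\rho$, typically via an explicit rank-one perturbation or a careful comparison of characteristic polynomials of quotient matrices of different orders. Additional care is needed for the boundary regimes: $|S|\leq a-2$, where the failure of the Lov\'asz criterion forces strong arithmetic constraints on $|T|$ and the $|C_i|$; $|T|\geq 1$, where one must rule out nontrivial-$T$ configurations of even larger spectral radius; and small $n$ near $a+1$, where the asymptotic estimates underlying the quotient-matrix comparison may fail and direct verification becomes necessary.
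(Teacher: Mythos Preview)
The paper does not prove Conjecture~\ref{conj::1.1}. It is quoted from \cite{E.C} as background, and the paper immediately remarks that it was partially settled by Fan, Lin and Lu \cite{Fan-Lin-Lu} and then fully resolved by Wei and Zhang \cite{Wei-Zhang}. The paper's own results (Theorem~\ref{thm::1.1}, Lemmas~\ref{lem::n-b-1}--\ref{lem::maximum}) concern a different extremal problem with the additional hypothesis $\delta(G)\geq a$ and a different extremal graph $H_n^{a,b}$. There is therefore no proof of this statement in the paper to compare your proposal against.

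On the proposal itself: the overall architecture (factor criterion $\rightarrow$ edge-maximal structure $\rightarrow$ quotient-matrix spectral comparison) is the standard route and is essentially what Wei--Zhang do, but your write-up is expressly a plan, not a proof. Step~3 is where all the work lies, and you have only described what must be shown, not shown it; the ``monotonicity'' reductions you list (merging components, trading $|T|$ for $|S|$) are exactly the delicate steps, and several of them do \emph{not} preserve $\varepsilon\leq -1$ in general, so each requires a bespoke argument rather than a one-line Perron--Frobenius appeal. Two further points: (i) In Step~1 you invoke $b>a$ to kill the odd-component term, but the conjecture as stated imposes no such hypothesis; the parity requirement $a\cdot n$ even is precisely the obstruction coming from the $a=b$ case, so either you must treat that case separately (where the full Tutte $k$-factor deficiency with odd components is needed) or explicitly restrict to $b>a$. (ii) In Step~2 the claim that ``$T$ is completely joined to $V(G)\setminus(S\cup T)$'' is not automatic from edge-maximality: adding an edge from $T$ to the outside \emph{increases} $\varepsilon_G(S,T)$, so such an edge being absent is perfectly consistent with $G$ remaining $[a,b]$-factor-free; one typically fixes this by replacing the arbitrary Lov\'asz pair $(S,T)$ with the canonical choice $T=\{v\notin S:d_{G-S}(v)<a\}$ (as in Lemma~\ref{lem::2.3}), after which the structural claims become provable.
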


Around this conjecture, Fan, Lin and Lu \cite{Fan-Lin-Lu} provided a partial result for $n\geq 3a+b-1$. The conjecture was then fully resolved by Wei and Zhang \cite{Wei-Zhang}. Recently, Hao and Li \cite{HL24} showed the result in this conjecture to be essentially best possible by constructing extremal graphs, and they also studied a similar problem in bipartite graphs.
Observe that $\delta(G)\geq a$ is a necessary condition for a graph $G$ to contain an $[a,b]$-factor. Hao and Li \cite{HL24} put forward the following problem.
\begin{prob}[Problem 35, \cite{HL24}]\label{pro-1}
Determine sharp lower bounds on the size or spectral radius of an $n$-vertex graph $G$ with minimum degree $\delta(G) \geq a$ such that $G$ contains an $[a,b]$-factor.
\end{prob}

Very recently, Tang and Zhang \cite{TZ2025} gave the answer to Problem 1 from the perspective of spectral radius for $a=b$. In this paper, we extended their result by investigating the remaining case of $b>a\geq 1$. Suppose that $H_{n}^{a,b}$ is a graph obtained from $K_a\vee (K_{n-a-b-1}\cup (b+1)K_1)$ by adding $a-1$ edges between one vertex in $(b+1)K_1$ and $a-1$ vertices in $K_{n-a-b-1}$.

\begin{thm}\label{thm::1.1}
		Let $a$ and $b$ be two positive integers with $b>a$, and let $G$ be a connected graph of order $n \geq 2(a+b+2)(b+2)$ with minimum degree $\delta(G)\geq a$. If
		$$\rho(G) \geq \rho(H_{n}^{a,b}),$$
		then $G$ contains an  $[a,b]$-factor, unless $G\cong H_{n}^{a,b}$.
	\end{thm}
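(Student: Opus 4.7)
The plan is to argue by contradiction: assume $G$ is connected, of order $n \ge 2(a+b+2)(b+2)$, has $\delta(G) \ge a$ and $\rho(G) \ge \rho(H_n^{a,b})$, yet admits no $[a,b]$-factor, and force $G \cong H_n^{a,b}$. Since $b > a$ there is no parity obstruction in Lov\'asz's $(g,f)$-factor theorem, so the criterion collapses to: $G$ has no $[a,b]$-factor if and only if some $S \subseteq V(G)$ satisfies
\[
\sigma(S) \;:=\; \sum_{v \in V(G)\setminus S}\max\{0,\,a - d_{G-S}(v)\} \;>\; b\,|S|.
\]
Fix such an $S$, set $T = \{v \notin S : d_{G-S}(v) \le a-1\}$, $s=|S|$ and $t=|T|$. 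The hypothesis $\delta(G) \ge a$ rules out $s=0$ and forces $d_S(v) \ge a - d_{G-S}(v) \ge 1$ for every $v \in T$; combined with $bs < \sigma(S) \le at$ this yields $t > bs/a$.

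Next I would pass to an edge-maximal super-graph $G^* \supseteq G$ preserving the certificate for $S$. Any edge inside $S$, between $S$ and $V(G)\setminus S$, or inside $V(G)\setminus(S\cup T)$ leaves every $d_{G-S}(v)$ with $v \in T$ unchanged, hence is ``free''; so we may assume $S$ induces $K_s$ and is completely joined to $V(G)\setminus S$, and $V(G)\setminus(S\cup T)$ induces $K_{n-s-t}$. What remains are edges lying inside $T$ or from $T$ to $V(G)\setminus(S\cup T)$; since each of the latter consumes $1$ unit of the budget $\sum_{v\in T}d_{G-S}(v)\le at-bs-1$ while each of the former consumes $2$, one may (non-decreasing in $\rho$) assume all residual edges go from $T$ into $K_{n-s-t}$. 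A Kelmans-type rearrangement then concentrates them on as few vertices of $T$ as possible, subject to the per-vertex cap $d_{G-S}(v)\le a-1$ and, in the $s=1$ regime, the matching lower bound $d_{G-S}(v) \ge a-1$ forced by $\delta(G) \ge a$.

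With these reductions, $G^*$ is determined by $s$, $t$ and the distribution of the residual edges. The target $H_n^{a,b}$ arises exactly at $s=a$, $t=b+1$ with one special vertex of $T$ absorbing all $a-1$ residual edges. I would finish by maximizing $\rho(G^*)$ through the equitable partition of $G^*$ into at most four cells (the cells $S$, the special vertices of $T$, the non-special vertices of $T$, and $V(G)\setminus(S\cup T)$) and analyzing the largest root of the resulting quotient matrix. For $s \ge a+1$, the bound $t > bs/a$ shrinks $K_{n-s-t}$ enough that a direct edge count gives $e(G^*) < e(H_n^{a,b})$ for $n \ge 2(a+b+2)(b+2)$, and combined with $\rho(H_n^{a,b}) > n-b-2$ this rules out $s \ge a+1$. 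For $s = a$, increasing $t$ beyond $b+1$ or spreading the deficit across more special vertices strictly decreases the largest root of the quotient matrix, which can be verified by a direct perturbation.

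The main obstacle is the regime $s=1$. There $\delta(G) \ge a$ forces $d_{G-S}(v) = a-1$ for \emph{every} $v \in T$, so the residual edges cannot be concentrated on a single vertex of $T$ but must be evenly spread across all $t = b+1$ such vertices. A direct structural inspection resolves this: the spectrally best way to spread is to have all vertices of $T$ share a common set $W \subset K_{n-b-2}$ of $a-1$ neighbors (via a Kelmans-type rearrangement on the $K_{n-b-2}$-side), but then the singleton apex $v_0$ together with $W$ induces a $K_a$ each of whose vertices is adjacent to every other vertex of $G^*$, so $G^* \cong K_a \vee (K_{n-a-b-1}\cup (b+1)K_1)$. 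Since the latter graph is a proper spanning subgraph of $H_n^{a,b}$ (missing exactly the $a-1$ extra edges that define $H_n^{a,b}$), we get $\rho(G^*) < \rho(H_n^{a,b})$ in the $s=1$ regime as well, and $H_n^{a,b}$ emerges as the unique extremal graph, giving the desired contradiction.
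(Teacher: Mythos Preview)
Your plan follows the paper's closely: both invoke the Heinrich--Hell--Kirkpatrick--Liu form of the $[a,b]$-factor criterion, pass to a spectrally/edge-maximal configuration in which $S$ is complete to everything and $V(G)\setminus(S\cup T)$ is a clique, and then case-split. The paper, however, splits primarily on $t=|T|$ (large $t$ via the Hong--Shu--Fang bound; then $t=b+1$ versus $t\ge b+2$), and---crucially---it chooses $S$ with $|S|$ \emph{maximal} among all certificates.

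Your case analysis has a genuine gap: you treat $s\ge a+1$, $s=a$, and $s=1$, but say nothing about $2\le s\le a-1$, which is a live range whenever $a\ge 3$. The paper disposes of all $s\le a-1$ at once precisely via the maximality of $|S|$: when $t=b+1$ one absorbs $a-s$ common $U$-neighbours of $T$ into $S$ to produce a strictly larger certificate, a contradiction. Your $s=1$ argument gestures at the same absorption, but for $2\le s\le a-1$ the residual-edge budget after absorbing is $s(t-b)-1\ge 1$ rather than $0$, so you do not land in the proper subgraph $K_a\vee(K_{n-a-b-1}\cup(b+1)K_1)$; you land in (an analogue of) $\mathcal G_n^{a,b}$, and you still owe the comparison with $H_n^{a,b}$.

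That comparison---showing $H_n^{a,b}$ is the unique spectral maximum over $\mathcal G_n^{a,b}$---is also where your ``verified by a direct perturbation of the quotient matrix'' for $s=a$ hides the real work. Graphs in $\mathcal G_n^{a,b}$ do not in general admit a four-cell equitable partition (the $a-1$ extra edges split both $T$ and $U$ into special and non-special parts, giving at least five cells), so a quotient-matrix monotonicity is not immediate; the paper isolates this as a standalone lemma proved by a Perron-vector inequality, and handles $t\ge b+2$ by a second, longer Perron-vector computation. These two Perron-vector arguments are where the substance of the proof lies, not the reductions preceding them.
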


%

	\section{Proofs of Theorem~\ref{thm::1.1}}
In this section, we give the proof of Theorem \ref{thm::1.1}. The following lemmas are used in the sequel. For any $v\in V(G)$, let $N_{G}(v)$ be the neighborhood of $v$. Particularly, let $N_{G}[u]=N_{G}(u)\cup\{u\}$.
\begin{lem}[See \cite{H.L-1}]\label{lem::2.0}
Let $G$ be a connected graph, and let $u,v$ be two vertices of $G$. Suppose that $v_{1},v_{2},\ldots,v_{s}\in N_{G}(v)\backslash N_{G}(u)$ with $s\geq 1$, and $G^*$ is the graph obtained from $G$ by deleting the edges $vv_{i}$ and adding the edges $uv_{i}$ for $1\leq i\leq s$. Let $x$ be the Perron vector of $A(G)$. If $x(u)\geq x(v)$, then $\rho(G)<\rho(G^*)$.
\end{lem}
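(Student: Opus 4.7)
The plan is a Rayleigh-quotient comparison on the Perron vector $x$ of $A(G)$, followed by a structural argument to rule out equality in the boundary case $x(u)=x(v)$. The overall strategy is standard for Kelmans-type edge-shifting inequalities, but the hypothesis only assumes $x(u)\ge x(v)$ rather than strict inequality, so the argument splits into two cases.

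First, I would record that since $G$ is connected, the Perron--Frobenius theorem applied to $A(G)$ guarantees that the Perron vector $x$ is strictly positive, i.e.\ $x(w)>0$ for every $w\in V(G)$. Normalizing $\|x\|=1$ gives $\rho(G)=x^{T}A(G)x$.

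Next, since $G^{*}$ is obtained from $G$ by deleting the $s$ edges $vv_i$ and inserting the $s$ edges $uv_i$, an edge-by-edge expansion of the quadratic forms gives
$$x^{T}A(G^{*})x-x^{T}A(G)x \;=\; 2\sum_{i=1}^{s}x(v_i)\bigl(x(u)-x(v)\bigr).$$
Because $x(u)\ge x(v)$, $s\ge 1$, and $x(v_i)>0$ for each $i$, this quantity is nonnegative. Combining this with the Rayleigh principle $\rho(G^{*})\ge x^{T}A(G^{*})x$ already yields $\rho(G^{*})\ge\rho(G)$, and if $x(u)>x(v)$ the middle inequality is strict, which immediately finishes this case.

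The subtler case, which I expect to be the main technical obstacle, is $x(u)=x(v)$: here the quadratic form is unchanged, so the Rayleigh step alone only produces $\rho(G^{*})\ge\rho(G)$. To upgrade this to a strict inequality, I would argue by contradiction, assuming $\rho(G^{*})=\rho(G)$. Equality in the Rayleigh bound with the positive vector $x$ forces $x$ to be a Perron eigenvector of $A(G^{*})$ as well, so $A(G^{*})x=\rho(G)\,x$. Evaluating this identity at the vertex $v$, whose neighborhood in $G^{*}$ is precisely $N_{G}(v)\setminus\{v_1,\dots,v_s\}$, and subtracting the analogous identity $A(G)x=\rho(G)\,x$ at $v$, the $\rho(G)x(v)$ terms cancel and I obtain
$$\sum_{i=1}^{s}x(v_i)=0,$$
which contradicts the positivity of $x$ since $s\ge 1$. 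Therefore $\rho(G^{*})>\rho(G)$ in this case as well, completing the proof.
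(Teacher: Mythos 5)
Your proof is correct: the Rayleigh-quotient comparison $\rho(G^*)\geq x^{T}A(G^*)x=\rho(G)+2\sum_{i=1}^{s}x(v_i)(x(u)-x(v))$, together with the equality-case analysis forcing $A(G^*)x=\rho(G)x$ and the resulting contradiction $\sum_{i=1}^{s}x(v_i)=0$ at the vertex $v$, is exactly the standard argument for this edge-shifting lemma. Note that the paper does not prove this statement itself but cites it from \cite{H.L-1}, where the proof is essentially the same as yours (the cited version derives the contradiction from the eigenvalue equation at $u$ rather than at $v$, an immaterial difference).
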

The following sharp upper bound on the spectral radius was obtained by Hong, Shu and Fang ~\cite{HSF} and Nikiforov~\cite{V.N}, independently.		
	\begin{lem}[See \cite{HSF,V.N}]\label{lem::2.1}
		Let $G$ be a graph on $n$ vertices and $m$ edges with minimum degree $\delta\geq 1$. Then$$\rho(G) \leq \frac{\delta-1}{2}+\sqrt{2 m-n \delta+\frac{(\delta+1)^{2}}{4}},$$
		with equality if and only if $G$ is either a $\delta$-regular graph or a bidegreed graph
		in which each vertex is of degree either $\delta$ or $n-1$.
	\end{lem}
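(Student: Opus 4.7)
The plan is to proceed by contradiction. Suppose $G$ is connected, satisfies $\delta(G)\geq a$ and $\rho(G)\geq\rho(H_n^{a,b})$, but contains no $[a,b]$-factor and $G\not\cong H_n^{a,b}$; I aim to deduce $\rho(G)<\rho(H_n^{a,b})$ and contradict the hypothesis. The first step is to invoke Lov\'asz's classical $[a,b]$-factor theorem (valid for $b>a$), which supplies disjoint $S,T\subseteq V(G)$ with
\begin{equation*}
\Psi_G(S,T):=b|S|+\sum_{v\in T}d_{G-S}(v)-a|T|\leq-1.
\end{equation*}
After normalizing $T$ to contain every $v\in V\setminus S$ with $d_{G-S}(v)\leq a-1$ and using $\delta(G)\geq a$, I obtain $s:=|S|\geq1$, with $t:=|T|\geq b+1$ when $s\leq a-1$ and $at\geq bs+1$ when $s\geq a$.

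Next I derive an edge-count bound. Partitioning $E(G)$ into edges of $G[S]$, between $S$ and $V\setminus S$, of $G[V\setminus(S\cup T)]$, and incident to $T$ in $G-S$, and using the Lov\'asz violation together with $e(G[T])\geq 0$, I get
\begin{equation*}
e(G)\leq f(s,t):=\binom{s}{2}+s(n-s)+\binom{n-s-t}{2}+at-bs-1.
\end{equation*}
The discrete differences $f(s+1,t)-f(s,t)=t-b$ and $f(s,t+1)-f(s,t)=a+s+t+1-n$, together with the admissibility constraints and $n\geq 2(a+b+2)(b+2)$, imply that $f$ is maximized on the admissible region uniquely at $(s,t)=(a,b+1)$ with value $e(H_n^{a,b})$; moreover one checks $f(s,t)\leq e(H_n^{a,b})-(a-s)$ along $t=b+1$, $s\leq a-1$, and that every other admissible pair gives a much larger deficit.

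I then split on whether the edge bound is saturated. If $e(G)=e(H_n^{a,b})$, tightness throughout forces $(s,t)=(a,b+1)$ and pins down the structure almost completely: $G[S]=K_a$ with $S$ dominating, $T$ is an independent $(b+1)$-set, $G[V\setminus(S\cup T)]=K_{n-a-b-1}$, and there are exactly $a-1$ edges from $T$ into this clique. The only remaining freedom is how those $a-1$ edges are distributed among the $T$-vertices; Lemma~\ref{lem::2.0} (edge shift along the Perron vector) strictly increases $\rho$ unless they are all concentrated on a single vertex of $T$, and that configuration is precisely $H_n^{a,b}$. This forces $G\cong H_n^{a,b}$, a contradiction. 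If instead $e(G)\leq e(H_n^{a,b})-1$, Lemma~\ref{lem::2.1} applied with $\delta=a$ (justified because its right-hand side is non-increasing in $\delta$ whenever $m<\binom{n}{2}$) yields the explicit quantitative bound $\rho(G)\leq (a-1)/2+\sqrt{2e(G)-na+(a+1)^2/4}$; combined with a lower bound on $\rho(H_n^{a,b})$ read off from its equitable partition into five natural cells ($K_a$, the $a-1$ clique vertices adjacent to the special $T$-vertex, the remaining $n-2a-b$ clique vertices, the special $T$-vertex, and the $b$ other deficient vertices), this produces $\rho(G)<\rho(H_n^{a,b})$, the desired contradiction.

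The main obstacle is the boundary configuration $(s,t)=(a-1,b+1)$ inside the second case: there $f(s,t)=e(H_n^{a,b})-1$, so the edge deficit is minimal and a direct application of Lemma~\ref{lem::2.1} leaves only a thin gap. Closing this gap requires the equitable-partition lower bound on $\rho(H_n^{a,b})$ to strictly exceed the Lemma~\ref{lem::2.1} upper bound at $m=e(H_n^{a,b})-1$, and this is exactly where the hypothesis $n\geq 2(a+b+2)(b+2)$ gets consumed, being used to absorb the $O(1)$ slack arising from the fact that $H_n^{a,b}$ is not bidegreed. A secondary subtlety is that each Lemma~\ref{lem::2.0} edge-shift in the saturated case must preserve the identified obstruction $(S,T)$, so that the resulting graph obtained by concentration is genuinely $H_n^{a,b}$ and still has no $[a,b]$-factor.
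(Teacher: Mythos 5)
Your proposal does not prove the statement it was supposed to prove. The statement in question is Lemma~\ref{lem::2.1}, the Hong--Shu--Fang/Nikiforov bound
$\rho(G)\leq \frac{\delta-1}{2}+\sqrt{2m-n\delta+\frac{(\delta+1)^{2}}{4}}$
together with its equality characterization: a purely spectral inequality for an arbitrary graph of minimum degree $\delta\geq 1$, with no mention of factors. What you wrote is instead a proof sketch of Theorem~\ref{thm::1.1}, the paper's main $[a,b]$-factor result: you invoke a factor-existence criterion, the extremal graph $H_{n}^{a,b}$, an edge-count optimization over obstruction pairs $(s,t)$, and Perron-vector edge shifts. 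None of this bears on Lemma~\ref{lem::2.1}. Worse, your argument explicitly applies Lemma~\ref{lem::2.1} (``Lemma~\ref{lem::2.1} applied with $\delta=a$\dots''), so read as a proof of that lemma it is circular: you assume the very inequality you were asked to establish.

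For the record, the paper does not prove Lemma~\ref{lem::2.1} either; it is quoted as a known result from the cited references \cite{HSF,V.N}, where it is established by spectral arguments (bounding sums of degrees over neighborhoods, equivalently walks of length two, against the Perron vector, which yields a quadratic inequality in $\rho$ whose larger root is the stated bound; the equality analysis then forces every vertex to have degree $\delta$ or $n-1$). If you want to supply a self-contained proof, that is the route to take. As a side remark, even judged as an attempt at Theorem~\ref{thm::1.1} your sketch deviates from the paper in ways you would need to justify: the paper uses the one-set criterion of Heinrich et al.\ (Lemma~\ref{lem::2.3}) rather than the two-set Lov\'asz theorem, and the monotonicity in $\delta$ that you assert is exactly Lemma~\ref{lem::2.2}, which carries its own hypotheses ($2q\leq p(p-1)$ and $0\leq x\leq p-1$) that must be checked before use.
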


	\begin{lem}[See \cite{HSF,V.N}]\label{lem::2.2}
		For nonnegative integers $p$ and $q$ with $2q \leq p(p-1)$ and $0 \leq x \leq p-1$, the function $f(x)=(x-1) / 2+\sqrt{2 q-p x+(1+x)^{2} / 4}$ is decreasing with respect to $x$.
	\end{lem}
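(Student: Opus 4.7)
The plan is to prove Lemma~\ref{lem::2.2} by a direct derivative computation, showing that $f'(x)\leq 0$ on $[0,p-1]$ and reducing the resulting inequality exactly to the hypothesis $2q\leq p(p-1)$. I write $R(x)=\sqrt{2q-px+(1+x)^{2}/4}$ so that $f(x)=(x-1)/2+R(x)$, and differentiate term by term.

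A routine calculation gives
$$f'(x)=\frac{1}{2}+\frac{-p+(1+x)/2}{2R(x)}=\frac{1}{2}-\frac{2p-1-x}{4R(x)}.$$
On the interval $0\leq x\leq p-1$ (which already forces $p\geq 1$) one has $2p-1-x\geq p\geq 1>0$, so the inequality $f'(x)\leq 0$ is equivalent to $2R(x)\leq 2p-1-x$. Both sides being nonnegative, I would square and expand: the $(1+x)^{2}$ terms cancel between $4R(x)^{2}=8q-4px+(1+x)^{2}$ and $(2p-1-x)^{2}=4p^{2}-4p(1+x)+(1+x)^{2}$, and the $-4px$ terms on each side also cancel, leaving precisely $8q\leq 4p(p-1)$. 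This is exactly the standing hypothesis $2q\leq p(p-1)$, and the implication is reversible, so $f'(x)\leq 0$ holds on the whole interval and $f$ is decreasing there.

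The only subtle bookkeeping concerns making sure $R(x)$ is real and strictly positive, so that division by $R(x)$ is legitimate and the squaring step is reversible rather than merely a one-sided implication. For this, note that the radicand $g(x)=2q-px+(1+x)^{2}/4$ has derivative $g'(x)=-p+(1+x)/2\leq 0$ on $x\leq 2p-1$, so $g$ is decreasing on $[0,p-1]$; its value at the right endpoint satisfies $g(p-1)=2q-p(p-1)+p^{2}/4\geq p^{2}/4>0$ by the hypothesis. Hence $R(x)>0$ throughout, both sides of the equivalent inequality $2R(x)\leq 2p-1-x$ are nonnegative, and the squaring step is a genuine equivalence. This is really the whole story; there is no deeper obstacle, as the statement is a one-variable calculus fact whose difficulty lies only in lining up the cancellations so that the conclusion matches the hypothesis on the nose.
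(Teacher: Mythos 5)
The paper does not prove Lemma~\ref{lem::2.2} at all; it quotes it from \cite{HSF,V.N}, so the only question is whether your argument stands on its own. Your core computation does, and it is the standard one: $f'(x)=\tfrac12-\tfrac{2p-1-x}{4R(x)}$, the factor $2p-1-x\geq p>0$ on $[0,p-1]$, and (wherever $R(x)>0$) the inequality $f'(x)\leq 0$ is equivalent, after squaring, to exactly $2q\leq p(p-1)$; the cancellations are as you describe.

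However, the step you yourself flag as ``the only subtle bookkeeping'' is wrong. You claim $g(p-1)=2q-p(p-1)+p^{2}/4\geq p^{2}/4$ ``by the hypothesis,'' but the hypothesis $2q\leq p(p-1)$ gives precisely the opposite inequality $g(p-1)\leq p^{2}/4$, and in fact $g(p-1)$ can be negative: take $q=0$, $p=10$, then $g(9)=-90+25=-65<0$, so $f$ is not even real-valued near the right end of $[0,p-1]$. Hence your claim that $R(x)>0$ throughout $[0,p-1]$ is false in general, and with it your justification that the differentiation and squaring are legitimate on the whole interval. The repair is easy: since (as you correctly note) the radicand $g$ is strictly decreasing on $[0,p-1]$, the set where $f$ is real is a subinterval $[0,x_0]$, and your derivative computation shows $f'\leq 0$ on its interior; so $f$ is decreasing wherever it is defined, which is the only sensible reading of the lemma and is all the paper ever uses. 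Indeed, in the paper's applications one takes $p=n$, $q=e(G)$ and $x\in[a,\delta]$, where $2q\geq n\delta\geq nx$ forces the radicand to be at least $(1+x)^{2}/4>0$, so the monotonicity is invoked only on a subinterval where $R$ is genuinely positive.
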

	
For any $S\subseteq V(G)$, let $G[S]$ be the subgraph of $G$ induced by $S$, and
	let $e(S)$ be the number of edges in $G[S]$. For any vertex $v\in V(G)$ and subset $S\subseteq V(G)$, suppose that $N_{S}(v)=N_{G}(v)\cap S$ and $d_{S}(v)=|N_{S}(v)|$.  Particularly, let $G-S=G[V(G)\backslash S]$.
Another version of Tutte $[a,b]$-factor Theorem (see \cite{Tutte1952}) is observed by Heinrich et al. in 1990, which uses only one subset $S$ of $V(G)$ instead of two subsets $S$ and $T$ of $V(G)$.
	
	\begin{lem}[See \cite{HHK90}]	\label{lem::2.3}
		Let $a$ and $b$ be two positive integers with $b>a$. Then a graph $G$ contains an $[a,b]$-factor if and only if for any $S \subseteq V(G)$,
		$$
		a|W|-\sum_{v\in W}d_{G-S}(v) \leq b|S|,
		$$
where $W=\{v \in V(G)\backslash S|~d_{G-S}(v)<a\}$.
	\end{lem}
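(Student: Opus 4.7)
The plan is to reduce the statement to the classical two-set $[a,b]$-factor theorem --- Lovász's $(g,f)$-factor theorem specialized to the constant functions $g\equiv a$, $f\equiv b$, where the odd-component correction vanishes because $b>a$ forces $g(v)\neq f(v)$ at every vertex. That theorem asserts that $G$ admits an $[a,b]$-factor if and only if
$$
\gamma_G(S,T)\;:=\;b|S|-a|T|+\sum_{v\in T} d_{G-S}(v)\;\geq\;0
$$
holds for every pair of disjoint subsets $S,T\subseteq V(G)$. Taking this result for granted, my task reduces to proving that the two-set inequality above is equivalent to the single-set inequality in the lemma.

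First I would dispatch the easy direction. If $G$ has an $[a,b]$-factor and $S\subseteq V(G)$ is arbitrary, then the choice $T:=W$ is admissible because $W\cap S=\emptyset$ by the very definition of $W$, and the two-set inequality $\gamma_G(S,W)\geq 0$ rearranges at once to $a|W|-\sum_{v\in W}d_{G-S}(v)\leq b|S|$.

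For the harder direction, I would assume the one-set inequality holds for every $S$ and derive $\gamma_G(S,T)\geq 0$ for arbitrary disjoint $S,T\subseteq V(G)$. The key maneuver is to partition $T$ as $T=T_1\cup T_2$, with $T_1:=\{v\in T:d_{G-S}(v)<a\}\subseteq W$ and $T_2:=T\setminus T_1$. Since $a-d_{G-S}(v)\leq 0$ on $T_2$ and $a-d_{G-S}(v)\geq 0$ on $W$, one obtains
$$
\sum_{v\in T}\bigl(a-d_{G-S}(v)\bigr)\;\leq\;\sum_{v\in T_1}\bigl(a-d_{G-S}(v)\bigr)\;\leq\;\sum_{v\in W}\bigl(a-d_{G-S}(v)\bigr)\;\leq\;b|S|,
$$
where the final step invokes the one-set hypothesis. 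Rearranging yields $\gamma_G(S,T)\geq 0$, and the two-set theorem then produces the required factor.

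The principal obstacle, beyond invoking the two-set theorem, is this forward reduction --- essentially the observation of Heinrich et al. Its crux is that $T=W$ is a canonical maximizer of $\sum_{v\in T}(a-d_{G-S}(v))$ over admissible $T$: terms from vertices outside $W$ are non-positive and can be discarded, while terms from vertices in $W$ are non-negative and can be freely adjoined. Once this sign analysis is in hand, the remaining manipulations are purely algebraic, and no further combinatorial subtlety is required.
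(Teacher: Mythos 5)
Your proof is correct: the easy direction via $T=W$, and the hard direction's sign analysis showing that $T=W$ canonically maximizes $\sum_{v\in T}\bigl(a-d_{G-S}(v)\bigr)$ over sets $T$ disjoint from $S$, together give a valid reduction to the two-set $[a,b]$-factor theorem (where the odd-component correction indeed vanishes since $a<b$). The paper itself offers no proof --- it cites Heinrich et al.\ \cite{HHK90} and explicitly describes the lemma as the one-subset version of Tutte's two-subset theorem \cite{Tutte1952} --- so your argument is precisely the observation being cited, not a different route.
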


\begin{lem}[See \cite{H.L-1}]\label{lem::adjacent}
Let $u, v$ be two distinct vertices of a connected graph $G$, and let $x$ be the Perron vector of $A(G)$.

\noindent (i) If $N_G(v)\backslash\{u\}\subset N_G(u)\backslash\{v\}$, then $x(u)> x(v)$.

\noindent (ii) If $N_G(v)\subseteq N_G[u]$ and $N_G(u)\subseteq N_G[v]$, then $x(u)=x(v)$.
\end{lem}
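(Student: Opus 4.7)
The plan is to work directly from the Perron eigenvalue equation $\rho(G)\,x(w) = \sum_{z \in N_G(w)} x(z)$ applied at $w=u$ and at $w=v$. Subtracting these two equations will express $x(u)-x(v)$ (up to a positive scalar) as a signed sum of entries of $x$ over explicitly identified neighborhood differences; since $G$ is connected, the Perron--Frobenius theorem guarantees that $x$ is componentwise strictly positive, so the sign of the resulting sum is determined entirely by which index sets are empty.

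For part (i), I would split into the two cases $uv\notin E(G)$ and $uv\in E(G)$, since these determine whether the cross terms $x(v)$ and $x(u)$ appear on the right-hand sides of the eigenvalue equations at $u$ and $v$ respectively. When $uv\notin E(G)$, subtracting and cancelling the common neighbors gives
\[
\rho(G)\bigl(x(u)-x(v)\bigr) \;=\; \sum_{w\in N_G(u)\setminus N_G(v)} x(w) \;-\; \sum_{w\in N_G(v)\setminus N_G(u)} x(w),
\]
and the hypothesis reduces to $N_G(v)\subsetneq N_G(u)$, so the second sum is empty while the first is nonempty. When $uv\in E(G)$, moving the cross terms $x(v)$ and $x(u)$ to the left side produces the prefactor $\rho(G)+1$:
\[
\bigl(\rho(G)+1\bigr)\bigl(x(u)-x(v)\bigr) \;=\; \sum_{w\in (N_G(u)\setminus\{v\})\setminus N_G(v)} x(w) \;-\; \sum_{w\in (N_G(v)\setminus\{u\})\setminus N_G(u)} x(w),
\]
and the hypothesis $N_G(v)\setminus\{u\}\subsetneq N_G(u)\setminus\{v\}$ again empties the second sum and leaves at least one positive term in the first. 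In both cases, positivity of $x$ forces $x(u)>x(v)$.

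For part (ii), the two inclusions $N_G(v)\subseteq N_G[u]$ and $N_G(u)\subseteq N_G[v]$ combine to give $N_G(u)\setminus\{v\}=N_G(v)\setminus\{u\}$: any $w\in N_G(v)\setminus\{u\}$ lies in $N_G[u]$ and is distinct from $u$, so $w\in N_G(u)$, and since $w\neq v$ we get $w\in N_G(u)\setminus\{v\}$, with the reverse inclusion being symmetric. Plugging this equality into the same two identities from part (i) makes the right-hand side vanish identically in both the adjacent and non-adjacent subcase, and therefore $x(u)=x(v)$. The principal obstacle in the whole argument is the careful bookkeeping of the cross terms $x(u)$ and $x(v)$ in the adjacent case: accounting for them correctly is exactly what produces the prefactor $\rho(G)+1$ rather than $\rho(G)$, and once this is handled both conclusions follow immediately from Perron positivity.
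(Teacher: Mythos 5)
Your proof is correct. The paper does not prove this lemma at all---it quotes it from \cite{H.L-1}---and your argument (subtracting the eigenvalue equations at $u$ and $v$, bookkeeping the cross terms in the adjacent case so that the prefactor becomes $\rho(G)+1$, and concluding via strict positivity of the Perron vector of a connected graph) is precisely the standard proof from that reference, so it matches the intended approach.
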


Denote by $\mathcal{G}_{n}^{a,b}$ the set of graphs obtained from $K_{a}\vee (K_{n-a-b-1}\cup (b+1)K_{1})$ by adding $a-1$ edges between $V(K_{n-a-b-1})$ and $V((b+1)K_{1})$.

\begin{lem}\label{lem::n-b-1}
Let $a$ and $b$ be two positive integers with $b>a$. If $G\in \mathcal{G}_{n}^{a,b}$ and $n\geq 2(a+b+2)(b+2)$, then
$$n-b-2< \rho(G)< n-b-1.$$
\end{lem}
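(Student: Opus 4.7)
The plan is to prove the two strict inequalities separately: the lower bound by subgraph monotonicity, and the upper bound by the Hong--Shu--Fang/Nikiforov estimate of Lemma~\ref{lem::2.1}.

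For the lower bound, I observe that in the base graph $K_{a}\vee(K_{n-a-b-1}\cup(b+1)K_{1})$ the vertex set $V(K_{a})\cup V(K_{n-a-b-1})$ induces a copy of $K_{n-b-1}$, because the join makes the $a$-clique fully adjacent to the $(n-a-b-1)$-clique. The $a-1$ extra edges of the construction preserve this clique, so every $G\in\mathcal{G}_{n}^{a,b}$ contains $K_{n-b-1}$ as a proper subgraph. Since $G$ is connected (the planted $K_{a}$ is adjacent to every other vertex), the standard Perron--Frobenius comparison yields
\[
\rho(G)>\rho(K_{n-b-1})=n-b-2.
\]

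For the upper bound, I first check that $\delta(G)=a$: each of the $b+1$ vertices in the independent part has exactly $a$ neighbors in the planted $K_{a}$, and since $b+1\ge a+2>a-1$, at least one such vertex is incident to none of the $a-1$ added edges and therefore has degree exactly $a$. A direct count gives
\[
m=\binom{a}{2}+\binom{n-a-b-1}{2}+a(n-a)+(a-1).
\]
Applying Lemma~\ref{lem::2.1} with $\delta=a$, the desired inequality $\rho(G)<n-b-1$ becomes, after isolating the square root and squaring,
\[
(2n-2b-a-1)^{2}>8m-4na+(a+1)^{2}.
\]
Substituting the value of $m$ and simplifying, the difference of the two sides collapses to $4\bigl(2n-ab-4a-2b\bigr)$, so the whole task reduces to the linear condition $2n>ab+4a+2b$, which is implied with enormous slack by the hypothesis $n\ge 2(a+b+2)(b+2)$.

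The main ``obstacle'' is thus nothing more than bookkeeping in the algebraic reduction; no delicate estimate is required, because the hypothesis on $n$ is far stronger than the lemma actually demands. The only points worth verifying with care are the edge count and the identification $\delta(G)=a$, and the resulting algebraic inequality is already strict, so one does not need to invoke the equality characterization of Lemma~\ref{lem::2.1}.
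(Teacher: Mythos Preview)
Your proof is correct and follows essentially the same approach as the paper: the lower bound via the $K_{n-b-1}$ subgraph (the paper uses the spanning subgraph $K_{n-b-1}\cup(b+1)K_{1}$, which gives the same value), and the upper bound via Lemma~\ref{lem::2.1} with $\delta=a$, reducing after the same algebra to the condition $2n>ab+4a+2b$. Your explicit verification that $\delta(G)=a$ is a nice touch; the paper instead cites Lemma~\ref{lem::2.2} to cover this, but the substance is identical.
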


\begin{proof}
Note that $G$ contains $K_{n-b-1}\cup (b+1)K_{1}$ as a proper spanning subgraph. Then $\rho(G)>\rho(K_{n-b-1}\cup (b+1)K_{1})= n-b-2$. Since $G\in \mathcal{G}_{n}^{a,b}$, it follows that
\begin{equation*}
\begin{aligned}
   e(G)={n-b-1\choose 2}+a(b+1)+a-1.
\end{aligned}
\end{equation*}
Combining this with $n\geq 2(a+b+2)(b+2)$, Lemmas \ref{lem::2.1} and \ref{lem::2.2}, we have
\begin{equation*}
\begin{aligned}
   \rho(G)&\leq \frac{a-1}{2}+\sqrt{(n-b-1)(n-b-2)+2a(b+1)+2a-2-an+\frac{(a+1)^2}{4}}\\
   &= \frac{a-1}{2}+\sqrt{\left(n-b-\frac{a}{2}-\frac{1}{2}\right)^2-(2n-ab-4a-2b)}\\
   &< \frac{a-1}{2}+ \left(n-b-\frac{a}{2}-\frac{1}{2}\right)~~(\mbox{since $n\geq 2(a+b+2)(b+2)$ and $b>a\geq 1$})\\
   &=n-b-1.
\end{aligned}
\end{equation*}
This completes the proof.
\end{proof}

\begin{lem}\label{lem::maximum}
Let $a$ and $b$ be two positive integers with $b>a$. If $G\in \mathcal{G}_{n}^{a,b}$ and $n\geq 2(a+b+2)(b+2)$, then
$$\rho(G)\leq \rho(H_{n}^{a,b}),$$
with equality if and only if $G\cong H_{n}^{a,b}$.
\end{lem}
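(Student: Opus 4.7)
The plan is an edge-switching argument showing that any maximizer $G\in\mathcal{G}_n^{a,b}$ of $\rho$ must satisfy $G\cong H_n^{a,b}$. Write $V(G)=A\cup B\cup C$, where $A$ induces $K_a$ and is joined to $B\cup C$, $B$ induces $K_{n-a-b-1}$, and $C$ is the independent set of size $b+1$; exactly $a-1$ edges go between $B$ and $C$. Let $C_+=\{c\in C:d_B(c)\ge 1\}$. Every graph in $\mathcal{G}_n^{a,b}$ with $|C_+|=1$ is isomorphic to $H_n^{a,b}$ (the clique $B$ is vertex-transitive on its subsets of each size), so it suffices to derive a contradiction from $|C_+|\ge 2$. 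Let $x$ be the Perron vector of $A(G)$ and pick $c^*\in C_+$ with $x(c^*)$ maximum.

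If some $c\in C_+\setminus\{c^*\}$ satisfies $N_B(c)\not\subseteq N_B(c^*)$ (the \emph{incomparable case}), then $N_G(c)\setminus N_G(c^*)=N_B(c)\setminus N_B(c^*)\ne\emptyset$, and Lemma~\ref{lem::2.0} with $u=c^*$, $v=c$ yields $G^*\in\mathcal{G}_n^{a,b}$ with $\rho(G^*)>\rho(G)$, contradicting maximality. Hence we may assume the \emph{nested case}: $N_B(c)\subseteq N_B(c^*)$ for every $c\in C_+$. Then $|N_B(c^*)|\le a-2$, so one can fix $c^{\diamond}\in C_+\setminus\{c^*\}$, a vertex $w\in N_B(c^{\diamond})\subseteq N_B(c^*)$, and $w'\in B\setminus N_B(c^*)$ (which exists since $|B|=n-a-b-1$ is much larger than $a$); nesting moreover forces $N_C(w')=\emptyset$.

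Set $G^*=G-c^{\diamond}w+c^*w'\in\mathcal{G}_n^{a,b}$. By Rayleigh, $\rho(G^*)\ge \rho(G)+\frac{2}{x^{\top}x}\,[x(c^*)x(w')-x(c^{\diamond})x(w)]$, so the goal becomes the inequality $x(c^*)x(w')>x(c^{\diamond})x(w)$. The Perron identities $\rho\,(x(c^*)-x(c^{\diamond}))=\sum_{u\in N_B(c^*)\setminus N_B(c^{\diamond})}x(u)$ and $(\rho+1)\,(x(w)-x(w'))=\sum_{c\in N_C(w)}x(c)$ govern the two sides. When $N_B(c^{\diamond})\subsetneq N_B(c^*)$, the first identity yields $x(c^*)/x(c^{\diamond})-1$ of order at least $x_A/\rho$, while the second gives $x(w)/x(w')-1$ only of order $a^2/\rho^2$; combined with the bound $\rho<n-b-1$ from Lemma~\ref{lem::n-b-1} and the hypothesis $n\ge 2(a+b+2)(b+2)$, this yields the strict inequality $\rho(G^*)>\rho(G)$, the desired contradiction.

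The main obstacle is the \emph{twin sub-case}, where every $c\in C_+$ has $N_B(c)=N_B(c^*)$: here $x(c^*)=x(c^{\diamond})$ and the Rayleigh gain above degenerates to a negative number. My plan is to first break the symmetry by the auxiliary swap $c^{\diamond}w\mapsto c^{\diamond}w'$, producing $G_1\in\mathcal{G}_n^{a,b}$ with $N_B(c^{\diamond})$ and $N_B(c^*)$ now incomparable; the incomparable case then applies inside $G_1$ to give $G_2$ with $\rho(G_2)>\rho(G_1)$. The subtle step is comparing $\rho(G_2)$ with $\rho(G)$ directly---since $\rho(G_1)<\rho(G)$ in general, the inequality $\rho(G_2)>\rho(G_1)$ alone is not enough---which I would handle by exploiting the explicit equitable partition of $G$ available in the twin sub-case, with parts $A$, $N_B(c^*)$, $B\setminus N_B(c^*)$, $C_+$, $C\setminus C_+$, and comparing the characteristic polynomial of its $5\times 5$ quotient matrix against that of $H_n^{a,b}$ evaluated at $\rho(G)$, using $n\ge 2(a+b+2)(b+2)$ to force the sign required for $\rho(G)<\rho(H_n^{a,b})$.
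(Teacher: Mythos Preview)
Your reduction to the nested case via Lemma~\ref{lem::2.0} is correct and parallels the paper's first step. The gap is in the nested case itself. Your single-edge swap $G^*=G-c^{\diamond}w+c^*w'$ together with the Rayleigh bound requires $x(c^*)x(w')>x(c^{\diamond})x(w)$, and as you acknowledge, this fails outright in the twin sub-case (where $x(c^*)=x(c^{\diamond})$ and $x(w)>x(w')$). Your proposed remedy---a preliminary swap to break symmetry, then a comparison of the characteristic polynomials of two $5\times5$ quotient matrices---is not carried out, and is really a different proof technique grafted on for the hardest configuration; it would need a full computation over all pairs $(k,m)$ with $km=a-1$, $k\ge2$. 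Even in the non-twin sub-case your order estimates (``$x_A/\rho$'' versus ``$a^2/\rho^2$'') are not rigorous as written: the inequality $x(c^*)x(w')>x(c^{\diamond})x(w)$ ultimately rests on a quantitative bound of the form $x(B\text{-vertex})/x(C\text{-vertex})\gg 1$, which you would still have to establish.

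The paper avoids the twin/non-twin split entirely by a different device: instead of moving one edge and using only the Perron vector $x$ of $G$, it compares $G$ directly with $G'\cong H_n^{a,b}$ via the bilinear identity $y^{\top}(\rho'-\rho)x=y^{\top}(A(G')-A(G))x$, where $y$ is the Perron vector of $G'$. After establishing nesting, the paper reduces everything to the single inequality $x(v_{p_1+1})\,y(w_1)>x(v_1)\,y(w_2)$, which it proves by deriving explicit eigenvector relations in both $G$ and $G'$ (the displayed equations leading to the bound on $f(\rho,\rho')$) and using $n-b-2<\rho,\rho'<n-b-1$. Because the comparison is against the fixed target $H_n^{a,b}$ rather than against a one-edge neighbour of $G$, the equality $x(c^*)=x(c^{\diamond})$ in the twin configuration is irrelevant. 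This two-Perron-vector technique is the idea your proposal is missing.
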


\begin{proof}
Suppose that $G$ is the graph that attains the maximum spectral radius in $\mathcal{G}_{n}^{a,b}$.
We partition $V(G)=U\cup S\cup W$ with $U=V(K_{n-a-b-1})=\{v_1,v_2,\ldots,v_{n-a-b-1}\}$, $S=V(K_a)=\{u_{1},u_2,\ldots,u_a\}$ and $W=V(G)\backslash (S\cup U)=\{w_1,w_2,\ldots,w_{b+1}\}$. Let $x$ be the Perron vector of $A(G)$, and let $\rho=\rho(G)$. Without loss of generality, we assume that $x(v_{i})\geq x(v_{i+1})$ and $x(w_{j})\geq x(w_{j+1})$ for $1\leq i\leq n-a-b-2$ and $1\leq j\leq b$. We assert that $N_{G}(v_{i+1})\subseteq N_{G}[v_i]$ and $N_{G}(w_{j+1})\subseteq N_{G}[w_j]$ for $1\leq i\leq n-a-b-2$ and $1\leq j\leq b$. Otherwise, there exist $i,j$ with $i<j$ such that $N_{G}(v_j)\nsubseteq N_{G}[v_i]$. Let
$w\in N_{G}(v_j)\backslash N_{G}[v_i]$ and let $G^{*}=G-wv_{j}+wv_{i}$. Then $G^*\in \mathcal{G}_{n}^{a,b}$. Since $x(v_{i})\geq x(v_{j})$, we have $\rho(G^*)>\rho(G)$ by Lemma \ref{lem::2.0}, which contradicts the maximality of $\rho$. This implies that  $N_{G}(v_{i+1})\subseteq N_{G}[v_i]$ for $1\leq i\leq n-a-b-2$. Similarly, we can deduce that $N_{G}(w_{j+1})\subseteq N_{G}[w_j]$ for $1\leq j\leq b$.
Let $d_{U}(w_1)=p_1$, $d_{U}(w_2)=p_2$ and $d_{W}(v_1)=q$. Again by the maximality of $\rho$ and Lemma \ref{lem::2.0}, we have $N_{U}(w_1)=\{v_1,v_2,\dots,v_{p_1}\}$, $N_{U}(w_2)=\{v_1,v_2,\dots,v_{p_2}\}$ and $N_{W}(v_1)=\{w_1,w_2,\dots,w_q\}$. If $p_1=a-1$ or $q=1$, then $G\cong H_{n}^{a,b}$, as required. Next we consider $p_1\leq a-2$ and $q\geq 2$ in the following.
Note that $x(v_i)=x(v_{{p_1}+1})$ for $p_1+2\leq i\leq n-a-b-1$, $x(u_j)=x(u_{1})$ for $2\leq j\leq a$. Then, by $A(G)x=\rho x$, we have
\begin{align}
  \rho x(v_1)&=\sum_{2\leq i\leq p_1}x(v_i)\!+\!(n\!-\!a\!-\!b\!-\!p_1\!-\!1)x(v_{p_1+1})\!+\!ax(u_1)\!+\!\sum_{1\leq i\leq q}x(w_i)\label{equ::inset-1}\\
 \rho x(v_{p_{1}+1})&=x(v_1)\!+\!\sum_{2\leq i\leq p_1}x(v_i)\!+\!(n\!-\!a\!-\!b\!-\!p_1\!-\!2)x(v_{p_1+1})\!+\!ax(u_1),\label{equ::inset-2}\\
\rho x(w_1)&=x(v_1)\!+\!\sum_{2\leq i\leq p_1}x(v_i)\!+\!ax(u_1)\label{equ::inset-3}.
  \end{align}
Thus, by (\ref{equ::inset-2}) and (\ref{equ::inset-3}), we get
\begin{eqnarray}\label{equ::max-2}
 x(w_1)=\frac{\rho-(n-a-b-p_1-2)}{\rho}x(v_{p_1+1}).
\end{eqnarray}
Combining this with (\ref{equ::inset-1}), (\ref{equ::inset-2}), Lemma \ref{lem::n-b-1} and $x(w_i)\leq x(w_1)$ for $2\leq i\leq q$, we get
\begin{equation*}
\begin{aligned}
(\rho+1)x(v_1)&=(\rho+1)x(v_{p_1+1})+\sum_{1\leq i\leq q}x(w_i)\\
   &\leq(\rho+1)x(v_{p_1+1})+qx(w_1)~~(\mbox{since $x(w_1)\geq x(w_i)$ for $2\leq i\leq q$})\\
   &=\frac{\rho^2+\rho+q(\rho-(n-a-b-p_1-2))}{\rho}x(v_{p_1+1}) ~~(\mbox{by (\ref{equ::max-2})})\\
   &<\frac{\rho^2+\rho+q(p_1+a+1)}{\rho}x(v_{p_1+1})~~(\mbox{since $\rho<n-b-1$}),\\
\end{aligned}
\end{equation*}
and hence
\begin{eqnarray}\label{equ::max-3}
 x(v_{p_1+1})>\frac{\rho(\rho+1)}{\rho^2+\rho+q(p_1+a+1)}x(v_1).
\end{eqnarray}

Assume that $E_1=\{w_1v_{i}|~p_1+1\leq i\leq a-1\}$ and $E_2=\{w_{i}v_j\in E(G) |~ 2\leq i\leq q, 1\leq j\leq p_2\}$. Let $G'=G-E_2+E_1$. Clearly, $G'\cong H_{n}^{a,b}$. Let $y$ be the Perron vector of $A(G')$, and let $\rho'=\rho(G')$. By symmetry, we get $y(v_i)=y(v_1)$ for $2\leq i\leq a-1$, $y(v_i)=y(v_a)$ for $a+1\leq i\leq n-a-b-1$, $y(u_i)=y(u_1)$ for $2\leq i\leq a$ and $y(w_i)=y(w_2)$ for $3\leq i\leq b+1$. From $A(G')y=\rho'y$, we have
\begin{align}
  \rho' y(w_1)&=(a-1)y(v_1)+ay(u_1)\label{equ::max-4}\\
 \rho' y(w_2)&=ay(u_1),\label{equ::max-5}\\
\rho' y(v_1)&=(a-2)y(v_1)+(n-2a-b)y(v_a)+ay(u_1)+y(w_1)\label{equ::max-6},\\
\rho' y(v_a)&=(a-1)y(v_1)+(n-2a-b-1)y(v_a)+ay(u_1)\label{equ::max-7},
\end{align}
Putting (\ref{equ::max-4}) into  (\ref{equ::max-7}), we get
\begin{eqnarray*}
 y(v_a)=\frac{\rho'}{\rho'-(n-2a-b-1)}y(w_1).
\end{eqnarray*}
Combining this with (\ref{equ::max-5}) and (\ref{equ::max-6}), we obtain that
\begin{eqnarray*}
(\rho'-a+2)y(v_1)=\frac{(n-2a-b)\rho'}{\rho'-(n-2a-b-1)}y(w_1)+y(w_1)+\rho'y(w_2),
\end{eqnarray*}
which leads to
\begin{eqnarray}\label{equ::max-8}
y(v_1)=\frac{(n-2a-b+1)\rho'-(n-2a-b-1)}{(\rho'-(n-2a-b-1))(\rho'-a+2)}y(w_1)+\frac{\rho'}{\rho'-a+2}y(w_2).
\end{eqnarray}
Substituting (\ref{equ::max-5}) and (\ref{equ::max-8}) into (\ref{equ::max-4}), we have
\begin{eqnarray}\label{equ::max-9}
 y(w_1)=\frac{(\rho'^2+\rho')(\rho'-(n-2a-b-1))}{\rho'^3\!-\!(n\!-\!a\!-\!b\!-\!3)\rho'^2-(n-b-3)\rho'+(a-1)(n-2a-b-1)}y(w_2).
\end{eqnarray}
Note that $x(w_1)\geq x(w_i)$ for $2\leq i\leq b+1$, $x(v_1)\geq x(v_j)$ for $2\leq j\leq p_1$. Combining this with (\ref{equ::max-3}) and (\ref{equ::max-9}), we have
\begin{equation*}
\begin{aligned}
&~~~y^{T}(\rho'-\rho)x\\
&=y^{T}(A(G')-A(G))x\\
   &=\sum_{w_1v_i\in E_1}(x(w_{1})y(v_{i})\!+\!x(v_{i})y(w_{1}))\!
-\sum_{w_iv_j\in E_2}\!(x(w_{i})y(v_{j})\!+\!x(v_{j})y(w_{i}))\\
   &\geq(a\!-\!1\!-\!p_1)(x(w_1)y(v_1)\!+\!x(v_{p_1+1})y(w_1)\!-\!x(w_2)y(v_1)\!-\!x(v_1)y(w_2))~~(\mbox{since $p_1\leq a\!-\!2$})\\
   &\geq (a\!-\!1-p_1)(x(v_{p_1+1})y(w_1)-x(v_1)y(w_2))~~~(\mbox{since $x(w_1)\geq x(w_2)$})\\
   &>\Big(\frac{\rho(\rho+1)}{\rho^2+\rho+q(p_1+a+1)}\cdot
   \frac{(\rho'^2+\rho')(\rho'\!-\!(n\!-\!2a\!-\!b\!-\!1))}{\rho'^3\!-\!(n\!-\!a\!-\!b\!-\!3)\rho'^2-(n-b-3)\rho'\!+\!(a\!-\!1)(n\!-\!2a\!-\!b\!-\!1)}\!-\!1\Big)\cdot\\
   &(a\!-\!1-p_1)x(v_{1})y(w_2)~~~(\mbox{by (\ref{equ::max-3}) and (\ref{equ::max-9})})\\
   &=\frac{(a\!-\!1-p_1)x(v_{1})y(w_2)\cdot f(\rho,\rho')}{(\rho^2+\rho+q(p_1+a+1))(\rho'^3\!-\!(n\!-\!b\!-\!3)(\rho'^2\!+\!\rho)
   \!+\!a\rho'^2\!+\!(a\!-\!1)(n\!-\!2a\!-\!b\!-\!1))},
\end{aligned}
\end{equation*}
where
\begin{equation*}
\begin{aligned}
f(\rho,\rho')=&\rho(\rho+1)(\rho'^2+\rho')(\rho'-(n-2a-b-1))-((\rho^2+\rho+p(p_1+a+1))\cdot\\
&(\rho'^3\!-\!(n\!-\!b\!-\!3)(\rho'^2\!+\!\rho)\!+\!a\rho'^2\!+\!(a\!-\!1)(n\!-\!2a\!-\!b\!-\!1)))\\
=&(a\!-\!1)(\rho^2\rho'^2\!+\!2\rho^2\rho'\!+\!\rho\rho'^2\!+\!2\rho\rho'\!-\!(n\!-\!2a\!-\!b\!-\!1)(\rho^2\!+\!\rho))\!-\!q(p_1\!+\!a\!+\!1)\rho'^3
\!+\!\\
&q(p_1\!+\!a\!+\!1)((n-a-b-3)\rho'^2+(n-b-3)\rho'-(a-1)(n-2a-b-1))\\
\geq&(a\!-\!1)[(\rho^2\!-\!(p_1\!+\!a\!+\!1)\rho')\rho'^2\!+\!(2\rho'\!-\!(n\!-\!2a\!-\!b\!-\!1))(\rho^2\!+\!\rho)]\\
&(\mbox{since $2\leq q\leq a\!-\!1$, $a\geq 1$, $n\geq 2(a\!+\!b\!+\!2)(b\!+\!2)$, $\rho'>n\!-\!b\!-\!2$ and $p_1\geq 0$})\\
\geq&(a\!-\!1)[((n\!-\!b\!-\!2)^2\!-\!(p_1\!+\!a\!+\!1)(n\!-\!b\!-\!1))\rho'^2\!+\!(2(n\!-\!b\!-\!2)\!-\!(n\!-\!2a\!-\!b\!-\!1))\cdot\\
&(\rho^2\!+\!\rho)]~~~(\mbox{since $\rho> n-b-2$, $p_1\leq a-2$ and $n-b-2<\rho'<n-b-1$})\\
\geq&0 ~~~(\mbox{since $a\geq 1$, $p_1\leq a-2$, $b>a$ and $n\geq 2(a+b+2)(b+2)$}).
\end{aligned}
\end{equation*}
It follows that $f(\rho,\rho')\geq 0$, and hence $\rho'>\rho$, which contradicts the maximality of $\rho$.

This completes the proof.\end{proof}


 For every two disjoint subsets $A$ and $B$ of $V(G)$,
	let $E(A,B)$ denote the set of edges in $G$ with one end in $A$ and the other one in $B$, and let $e(A,B)=|E(A,B)|$. Now, we shall give the proof of Theorem \ref{thm::1.1}.
	
	\begin{proof}[\bf Proof of Theorem \ref{thm::1.1}]
Suppose that $G$ is a graph with the maximum spectral radius contains no $[a,b]$-factors, where $b>a\geq 1$. By Lemma \ref{lem::2.3}, there exists some $S \subseteq V(G)$ satisfying $|S|$ as large as possible such that $a|W|-\sum_{v \in W} d_{G-S}(v) \geq b|S|+1$, where $W=\{v \in V(G)\backslash S~|~d_{G-S}(v)<a\}$. Let $|W|=t$ and $|S|=s$. Then
		\begin{eqnarray}\label{equ::thm1-1}
			\sum_{v \in W} d_{G-S}(v) \leq at-b s-1.
		\end{eqnarray}
 We have the following three claims.
		
%
%

		{\flushleft\bf{Claim 1.}} $t \geq b+1$.
		
		Note that $\delta(G)\geq a$ and $d_{G-S}(v)\geq \delta(G)-s\geq a-s$ for any $v\in W$. Then
		$$(a-s) t \leq \sum_{v \in W} d_{G-S}(v) \leq at-b s-1 $$
		due to (\ref{equ::thm1-1}), and hence $t \geq b+1/s.$ It follows that $t \geq b+1$ since $t$ is a positive integer and $s\geq 1$, as required. \qed

		{\flushleft\bf{Claim 2.}} $s \leq t-1$.
		
		Otherwise, $s \geq t$. By (\ref{equ::thm1-1}), we have
		\begin{equation*}
			\begin{aligned}
				0\leq \sum_{v \in W} d_{G-S}(v) \leq at-bs-1< -1,
			\end{aligned}
		\end{equation*}
		a contradiction. Thus, $s \leq t-1$, as required.\qed

{\flushleft\bf{Claim 3.}} $s\geq 1$.
		
Otherwise, $s=0$. By (\ref{equ::thm1-1}) and $\delta(G) \geq a$, we have
		
		\begin{equation*}
			\begin{aligned}
				a t \leq \sum_{v \in W} d_{G}(v)=\sum_{v \in W} d_{G-S}(v)\leq a t-1,
			\end{aligned}
		\end{equation*}
		a contradiction. It follows that $s\geq 1$.\qed

Note that $H_{n}^{a,b}$ contains no $[a,b]$-factors and $K_{n-b-1}$ is a proper subgraph of $H_{n}^{a,b}$. Combining this with the maximality of $\rho(G)$, we have
\begin{equation}\label{equ::lower}
			\begin{aligned}
				\rho(G)\geq \rho(H_{n}^{a,b})>\rho(K_{n-b-1})=n-b-2.
			\end{aligned}
		\end{equation}
Again by the maximality of $\rho(G)$, we can deduce that $G[V(G)\backslash W]\cong K_{n-t}$ and $e(S,W)=st$. By (\ref{equ::thm1-1}), we have
		\begin{equation}\label{equ::upper-edge}
			\begin{aligned}
				e(G)& \leq \sum_{v\in W} d_{G-S}(v)+e(S, W)+e(G[V(G)\backslash W]) \\
				& \leq a t-b s-1+s t+{n-t\choose 2}\\
				&=a t+s(t-b)-1+\frac{(n-t)(n-t-1)}{2}. \\
			\end{aligned}
		\end{equation}
We consider the following two cases depend on the value of $t$.

{\flushleft\bf{Case 1.}} $t\geq \frac{n}{b+2}$.

Since $n\geq 2(a+b+2)(b+2)$, we have $t\geq \frac{n}{b+2}\geq 2(a+b+2)$. Combining this with  (\ref{equ::upper-edge}), Claim 2, $\delta(G)\geq a$, Lemmas \ref{lem::2.1} and \ref{lem::2.2}, we get
		\begin{equation*}
			\begin{aligned}
				\rho(G) & \leq  \frac{a-1}{2}+\sqrt{2e(G)-n a+\frac{(a+1)^2}{4}} \\
				& \leq \frac{a\!-\!1}{2}\!+\!\sqrt{\Big(n\!-\!b\!-\!\frac{a}{2}\!-\!\frac{3}{2}\Big)^2\!-\!(2(t\!-\!b\!-\!1)n\!+\! a(b\!-\!2t\!+\!1)\!+\!b(b\!+\!2s\!+\!3)\!-\!t(2s\!+\!t\!+\!1)\!+\!4)}\\
				&\leq \frac{a\!-\!1}{2}\!+\!\sqrt{\Big(n\!-\!b\!-\!\frac{a}{2}\!-\!\frac{3}{2}\Big)^2\!-\!(t^2\!-\!(2b\!+\!2a\!+\!3)t\!+\!ab\!+\!b^2\!+\!a\!+\!3b-2s+4)}~~~~(\mbox{since $n\geq s+t$}) \\			
& \leq  \frac{a\!-\!1}{2}\!+\!\sqrt{\Big(n\!-\!b\!-\!\frac{a}{2}\!-\!\frac{3}{2}\Big)^2\!-\!(t^2\!-\!(2b\!+\!2a\!+\!5)t\!+\! ab\!+\!b^2\!+\!a\!+\!3b\!+\!6)}~~~~(\mbox{since $s\leq t-1$}) \\
& \leq  \frac{a\!-\!1}{2}\!+\!\sqrt{\Big(n\!-\!b\!-\!\frac{a}{2}\!-\!\frac{3}{2}\Big)^2\!-\!(ab+b^2-a+b+2)}~~~~(\mbox{since $t\geq 2(a+b+2)$}) \\
&<n-b-2 ~~~~(\mbox{since $b>a\geq 1$}),
			\end{aligned}
		\end{equation*}
which contradicts (\ref{equ::lower}).

{\flushleft\bf{Case 2.}} $b+1\leq t<\frac{n}{b+2}$.

In this case, we have $n\geq t(b+2)+1$. Let $x$ be the Perron vector of $A(G)$, and let
 $U=V(G)\backslash(S\cup W)=\{u_1,u_2,\ldots,u_{n-s-t}\}$ with $x(u_1)\geq x(u_2)\geq \cdots\geq x(u_{n-s-t})$. Then $l=|U|=n-s-t\geq bt+2$ due to $n\geq t(b+2)+1$ and $s\leq t-1$. We first assert that $e(W)=0$. Otherwise, $uv\in E(W)$. Observe that $d_{U}(u)\leq \sum_{v\in W}d_{G-S}(v)\leq at-bs-1<bt-1<l$. Thus, there exists some vertex $w\in U$ such that $vw\notin E(G)$. Assume that $z\in W$ with $x(z)=\max\{x(v)|~v\in W\}$ and $d_{W}(z)=q$. Note that $z\in W$. Then $d_{G-S}(z)\leq a-1$. By $\rho(G)x=A(G)x$, we have
\begin{eqnarray*}
 &\rho(G) x(z)&\leq \sum_{1\leq i\leq a-1-q}x(u_i)+q x(z)+\sum_{v\in S}x(v),\\
 &\rho(G) x(u_{n-s-t})&\geq  \sum_{1\leq i\leq a-1-q}x(u_i)+(n\!-\!s\!-\!t\!-\!a\!+\!q)x(u_{n-s-t})+\sum_{v\in S}x(v),
\end{eqnarray*}
from which we get that
$$(\rho(G)-q)(x(u_{n-s-t})-x(z))\geq (n-s-t-a)x(u_{n-s-t})>0$$
due to $n\geq t(b+2)+1$, $t\geq b+1$ and $s\leq t-1$. Combining this with (\ref{equ::lower}) and $q\leq a-1$, we have $\rho(G)>n-b-1>a-1\geq q$. It follows that $x(u_{n-s-t})>x(z)$. Combining this with  $x(w)\geq x(u_{n-s-t})$ and $x(z)\geq x(u)$, we get $x(w)>x(u)$. Let $G'=G-uv+wv$. Then $\sum_{v\in W}d_{G'-S}(v)=\sum_{v\in W}d_{G-S}(v)-1<at-bs-1$. By Lemmas \ref{lem::2.0} and \ref{lem::2.3}, we obtain that $G'$ contains no $[a,b]$-factors and $\rho(G')>\rho(G)$, which contradicts the maximality of $\rho(G)$. This implies that $e(W)=0$.

{\flushleft\bf{Subcase 2.1}} $t=b+1$.

If  $s\geq a+1$, then
$$0\leq \sum_{v\in W}d_{G-S}(v)\leq at-bs-1=(a-s)b+a-1\leq a-b-1<0$$
due to $b>a$ and $t=b+1$, a contradiction. If $s\leq a-1$, since $d_{G}(v)\geq \delta(G)\geq a$, we have $d_{G-S}(v)=d_{U}(v)\geq a-s$ for $v\in W$. Recall that $U=\{u_1,u_2,\ldots,u_{n-s-t}\}$ with $x(u_1)\geq x(u_2)\geq \cdots\geq x(u_{n-s-b-1})$. By the maximality of $\rho(G)$ and Lemma \ref{lem::2.0}, we have $\{u_1,u_2,\dots,u_{a-s}\}\subseteq N_{G}(v)$ for any $v\in W$. Let $S'=S\cup \{u_1,u_2,\ldots,u_{a-s}\}$. Then $|S'|=a$ and
$$\sum_{v\in W}d_{G-S'}(v)=\sum_{v\in W}d_{G-S}(v)\!-\!(a\!-\!s)(b\!+\!1)\leq at-ab+s-a-1<at-b|S'|-1$$
due to $s\leq a-1$ and (\ref{equ::thm1-1}), which contradicts the maximality of $s$. This implies that $s=a$. Combining this with $t=b+1$, we have $G\in \mathcal{G}_{n}^{a,b}$. By Lemma \ref{lem::maximum}, we get
$G\cong H_{n}^{a,b}$, as required.

{\flushleft\bf{Subcase 2.2}} $t\geq b+2$.

 If $s\leq a-1$, since $n\geq t(b+2)+1$ and $t\geq b+2$, we can deduce a contradiction by using a similar analysis as Case 1. Thus, we consider $s\geq a$. Let $W=W_1\cup W_2$ with $W_1=\{w_1,w_2,\ldots,w_{t-b-1}\}$ and $W_2=\{w_{t-b},w_{t-b+1},\ldots,w_{t}\}$ such that $x(w_1)\geq x(w_2)\geq\cdots \geq x(w_t)$, and let $S=S_1\cup S_2$ with $S_{1}=\{v_1,v_2,\ldots,v_{s-a}\}$ and $S_2=\{v_{s-a+1},\ldots,v_{s}\}$. For $1\leq i\leq n-s-t$ and $1\leq j\leq t$, we have $N_{G}(w_j)\backslash\{u_{i}\}\subseteq N_{G}(u_{i})\backslash\{w_j\}$, and hence $x(u_{i})>x(w_j)$ by Lemma \ref{lem::adjacent}. From $A(G)x=\rho(G)x$, we have
\begin{eqnarray*}
 &\rho(G) x(u_{n-s-t})&\geq \sum_{1\leq i\leq n-s-t-1}x(u_i)+\sum_{1\leq i\leq s}x(v_i),\\
 &\rho(G) x(v_1)&= \sum_{1\leq i\leq n-s-t}x(u_i)+\sum_{2\leq i\leq s}x(v_i)+\sum_{1\leq i\leq t}x(w_i).\\
\end{eqnarray*}
Thus,
\begin{equation*}
\begin{aligned}
(\rho(G)+1)(2x(u_{n-s-t})-x(v_1))\geq&x(u_{n-s-t})+\sum_{1\leq i\leq n-s-t-1}x(u_i)+\sum_{1\leq i\leq s}x(v_i)-\sum_{1\leq i\leq t}x(w_i)\\
>&x(u_{n-s-t})+\sum_{1\leq i\leq n-s-2t-1}x(u_i)\\
>&0
\end{aligned}
\end{equation*}
due to $n\geq t(b+2)+1$, $s\leq t-1$ and $x(u_i)>x(w_j)$ for $1\leq i\leq n-s-t$ and $1\leq j\leq t$. It follows that $2x(u_{n-s-t})>x(v_1)$.

 Suppose that $E_1=\{uv\in E(G)|~u\in S_1\cup U, v\in W_2\}$ and $E_2=\{uv|~u\in U, v\in W_1\}\cup \{w_iw_j|~1\leq i<j\leq t-b-1\}$. Let $G^*=G-E_1+E_2$ and let $y$ be the Perron vector of $A(G^*)$. Clearly, $G^*\cong K_{a}\vee (K_{n-a-b-1}\cup (b+1)K_1)$. By symmetry, we have $y(v)=y(w_1)$ for $v\in V(G)\backslash(W_2\cup S_2)$, $y(v)=y(v_s)$ for $v\in S_2$ and $y(v)=y(w_{t-b})$ for $v\in W_2$. From $A(G^*)y=\rho(G^*)y$, we have
\begin{align*}
\rho(G^*)y(w_{t-b}) &=ay(v_s),\\
\rho(G^*)y(w_1) &=(n-a-b-2)y(w_1)+ay(v_s),\\
\rho(G^*)y(v_s) &=(n-a-b-1)y(w_1)+(a-1)y(v_s)+(b+1)y(w_{t-b}).
\end{align*}
Since $N_{G}(w_{t-b})\subseteq N_{G}(w_1)$, we have $y(w_1)>y(w_{t-b})$ by Lemma \ref{lem::adjacent}. By using a simple calculation, we get
\begin{equation*}
\begin{aligned}
 \rho(G^*)(2y(w_1)-y(v_s))=&(n-a-b-3)y(w_1)+(a+1)y(v_s)-(b+1)y(w_{t-b})\\
 >&(n-a-2b-4)y(w_1)+(a+1)y(v_s)\\
 >&0,
\end{aligned}
\end{equation*}
and hence $2y(w_1)>y(v_s)$. Combining this with $n\geq t(b+2)+1$, $t\geq b+2$ and $b>a$, we get
\begin{eqnarray*}
 \rho(G^*)(y(w_1)-2y(w_{t-b}))=(n\!-\!a\!-\!b\!-\!2)y(w_1)-ay(v_s)>(n-3a-b-2)y(w_1)>0.
\end{eqnarray*}
This implies that $y(w_1)>2y(w_{t-b})$. Assume that $e(U,W_i)=l_i$ for $i=1,2$. Thus, $\sum_{v\in W}d_{G-S}(v)=l_1+l_2\leq at-bs-1$. Combining this with $n\geq t(b+2)+1$, $t\geq b+2$ and $b>a$, we get
\begin{equation}\label{equ::big}
\begin{aligned}
&(n\!-\!s\!-\!t)(t\!-\!b\!-\!1)\!-\!(l_1\!+\!l_2)\!-\!(s\!-\!a)(b\!+\!1)\\
\geq & n\!-\!s-t-(at-bs-1)-(s-a)(b+1)~~~(\mbox{since $t\geq b+2$ and $l_1+l_2\leq at-bs-1$})\\
\geq & t(b-a)+t-2s+ab+a+2~~~(\mbox{since $n\geq t(b+2)+1$})\\
>& 2(t-s)+ab+a+2~~~(\mbox{since $b>a$})\\
\geq& ab+a+4 ~~~(\mbox{since $s\leq t-1$})\\
>&0.
\end{aligned}
\end{equation}
  Note that $y(w_1)>2y(w_{t-b})$ and $2x(u_{n-s-t})>x(v_1)$. Then
  \begin{equation}\label{equ::big-1}
\begin{aligned}
x(u_{n-s-t})y(w_1)-x(v_1)y(w_{t-b})>y(w_{t-b})(2x(u_{n-s-t})-x(v_1))>0.
\end{aligned}
\end{equation}
Since $N_{G}(u_1)\backslash \{v_1\}\subseteq N_{G}(v_1)\backslash \{u_1\}$, we have $x(v_1)>x(u_1)$ by Lemma \ref{lem::adjacent}. Therefore,
\begin{equation*}
\begin{aligned}
&~~~y^{T}(\rho(G^*)-\rho(G))x\\
&=y^{T}(A(G^*)-A(G))x\\
   &=\sum_{u_iv_j\in E_2}(x(u_{i})y(v_{j})\!+\!x(v_{j})y(u_{i}))\!-\!\sum_{u_iv_j\in E_1}\!(x(u_{i})y(v_{j})\!+\!x(v_{j})y(u_{i}))\\
   &\geq((n\!-\!s\!-\!t)(t\!-\!b\!-\!1)\!-\!l_1)(x(u_{n-s-t})y(w_1)\!+\!x(w_{t-b-1})y(u_1))\!+\!(t\!-\!b\!-\!1)(t\!-\!b\!-\!2)
   x(w_{t-b-1})y(w_{1})\\
   \end{aligned}
\end{equation*}
\begin{equation*}
\begin{aligned}
   &~~~\!-\!(s\!-\!a)(b\!+\!1)(x(v_1)y(w_{t-b})\!+\!x(w_{t-b})y(v_1))\!-\!l_2(x(u_1)y(w_{t-b})\!+\!x(w_{t-b})y(u_1))\\
   &> ((n\!-\!s\!-\!t)(t\!-\!b-1)\!-\!l_1)(x(u_{n-s-t})y(w_1)\!+\!x(w_{t-b-1})y(v_1))\!-\!
   ((s-a)(b+1)+l_2)\cdot\\
   &~~(x(v_1)y(w_{t-b})+x(w_{t-b})y(v_1))~~~~(\mbox{since $x(v_1)>x(u_1)$ and $y(v_1)=y(u_1)$})\\
    &>((n\!-\!s\!-\!t)(t\!-\!b\!-\!1)\!-\!(s\!-\!a)(b\!+\!1)\!-\!(l_1\!+\!l_2))x(w_{t-b-1})y(v_1)\\
    &~~(\mbox{by (\ref{equ::big}), (\ref{equ::big-1}) and $x(w_{t-b-1})\geq x(w_{t-b})$})\\
    &>0.
\end{aligned}
\end{equation*}
It follows that $\rho(G^*)>\rho(G)$. Note that $0=\sum_{v\in W_2}d_{G^*-S_2}(v)\leq a|W_2|-b|S_2|-1=a(b+1)-ab-1=a-1$. Then $G^*$ is a graph contains no $[a,b]$-factors and has a larger spectral radius than $G$, a contradiction.

This completes the proof.\end{proof}

\section{Concluding remarks}
%

Employing a proof technique analogous to that of Theorem 1.4 in \cite{LFZ25}, we obtain the following result, which addresses Problem 1 from the perspective of size conditions.
\begin{thm}\label{thm::edge}
Let $a$ and $b$ be two positive integers with $b>a$, and let $G$ be a connected graph of order $n\geq 4a+\frac{5b}{2}+6$ with minimum degree $\delta(G)\geq a$. If
		$$e(G) \geq {n-b-1\choose 2}+ab+2a,$$
then $G$ contains an $[a,b]$-factor.	
	\end{thm}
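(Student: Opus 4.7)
The plan is to argue by contradiction via direct edge counting, following a template analogous to the proof of Theorem~\ref{thm::1.1} but replacing spectral estimates by combinatorial counting. Assume that $G$ contains no $[a,b]$-factor. By Lemma~\ref{lem::2.3}, choose $S\subseteq V(G)$ with $|S|$ as large as possible satisfying $\sum_{v\in W}d_{G-S}(v)\leq at-bs-1$, where $W=\{v\in V(G)\setminus S:d_{G-S}(v)<a\}$ and $s=|S|$, $t=|W|$. The proofs of Claims~1--3 in Theorem~\ref{thm::1.1} only use $\delta(G)\geq a$ and $b>a$, so they apply verbatim to give $t\geq b+1$ and $1\leq s\leq t-1$; moreover $\sum_{v\in W}d_{G-S}(v)\geq 0$ gives the integer inequality $bs\leq at-1$.

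Counting edges with the partition $V(G)=S\cup W\cup(V(G)\setminus(S\cup W))$ yields
$$
e(G)\;\leq\;\sum_{v\in W}d_{G-S}(v)+e(S,W)+e(G[V(G)\setminus W])\;\leq\;E(s,t):=at+s(t-b)-1+\binom{n-t}{2}.
$$
I will show $E(s,t)\leq\binom{n-b-1}{2}+ab+2a-1$ under the stated hypotheses, which contradicts the assumption $e(G)\geq\binom{n-b-1}{2}+ab+2a$. Note that this matches the extremal value $e(H_n^{a,b})=\binom{n-b-1}{2}+ab+2a-1$, so the bound is tight.

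The verification splits on $t$. When $t=b+1$, the integer constraint $bs\leq ab+a-1$ together with $a\leq b$ forces $s\leq a$, hence $E(s,b+1)=ab+a+s-1+\binom{n-b-1}{2}\leq\binom{n-b-1}{2}+ab+2a-1$ directly. When $t\geq b+2$, set $u:=t-b-1\geq 1$ and apply the tighter bound $s\leq(at-1)/b$ (which is strictly tighter than $s\leq t-1$ because $a<b$ and $t\geq b+1$). Using the identity $\binom{n-b-1}{2}-\binom{n-t}{2}=(t-b-1)(2n-b-t-2)/2$, the desired inequality expands into the quadratic
$$
(2a+b)\,u^2-\bigl(2bn-4ab-4a-2b^2-3b+2\bigr)\,u+2(a-1)\;\leq\;0.
$$
The leading coefficient is positive, so the zero set is an interval $[u_-,u_+]$; I must show that the relevant values of $u$ lie in this interval. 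I would verify this by checking two endpoints. At $u=1$ (the case $t=b+2$), the quadratic evaluates to $2b^2+4ab+4b+8a-4-2bn$, which is nonpositive whenever $n\geq 2a+b+2+(4a-2)/b$, a condition implied by $n\geq 4a+\tfrac{5b}{2}+6$ and $b>a\geq 1$. At the crossover value $t_1=(bn+1)/(a+b)$, where the alternative constraint $s\leq n-t$ becomes binding, substitution gives $E(s,t_1)\leq(n-t_1)(n+t_1-1)/2$, and the difference from the threshold is a polynomial in $n$ whose leading $n^2$-coefficient works out to $-b^2/(a+b)^2$, negative because $a<b$. For $t>t_1$, I instead use $s\leq n-t$; the resulting upper bound $at+(n-t)(t-b)-1+\binom{n-t}{2}$ has derivative $a+b-t+\tfrac12<0$ on this range, so it is decreasing in $t$ and maximized at $t=t_1$, which has already been controlled.

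The main obstacle I expect is the endpoint verification at the crossover $t_1$: bookkeeping the lower-order terms of the resulting polynomial inequality in $n$ is what calibrates the precise threshold $n\geq 4a+\tfrac{5b}{2}+6$. The remaining pieces---the structural claims on $s$ and $t$, and the convexity/monotonicity observations---are direct inheritances from the proof of Theorem~\ref{thm::1.1}.
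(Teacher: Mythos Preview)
The paper does not supply its own detailed proof of this theorem; it only remarks that the argument is ``analogous to that of Theorem~1.4 in \cite{LFZ25}.'' Your plan---apply Lemma~\ref{lem::2.3}, inherit Claims~1--3 verbatim (which is legitimate, since those claims use only $\delta(G)\ge a$, $b>a$, and the deficiency inequality, not the spectral maximality), bound $e(G)$ by $E(s,t)=at+s(t-b)-1+\binom{n-t}{2}$, and then maximize $E(s,t)$ over the feasible region---is the natural strategy and is almost certainly what the authors have in mind. The case $t=b+1$ is handled correctly, your reduction of the range $b+2\le t\le t_1$ to the quadratic $(2a+b)u^2-(2bn-4ab-4a-2b^2-3b+2)u+2(a-1)\le 0$ in $u=t-b-1$ checks out, and the monotonicity argument for $t>t_1$ (derivative $a+b-t+\tfrac12<0$ once $t>t_1>a+b$) is fine.

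The genuine gap is precisely the one you flag: you never carry out the verification at the crossover $u^*=t_1-b-1=(bn-ab-a-b^2-b+1)/(a+b)$. This is not a formality. The check at $u=1$ only requires $n\ge 2a+b+2+(4a-2)/b$, far weaker than $4a+\tfrac{5b}{2}+6$, so the stated threshold on $n$ must be dictated by the crossover computation (or some integer refinement near it). Until you substitute $u^*$ into the quadratic, clear denominators (the resulting polynomial in $n$ has leading coefficient $b^2>0$), and verify nonnegativity for $n\ge 4a+\tfrac{5b}{2}+6$, the proof is incomplete. One small correction: the leading coefficient $-b^2/(a+b)^2$ you mention is negative for all positive $a,b$, so ``because $a<b$'' is not the reason; what $a<b$ is actually used for is that $s\le(at-1)/b$ is strictly tighter than $s\le t-1$.
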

The condition in Theorem~\ref{thm::edge} is tight. For $G\in\mathcal{G}_{n}^{a,b}$, we have $e(G)={n-b-1\choose 2}+ab+2a-1$, but $G$ contains no fractional $[a,b]$-factors.

Let $h: E(G) \rightarrow[0,1]$ be a function on $E(G)$ and let $a, b$ be two positive integers with $a\leq b$. If $a\leq \sum_{e \in E_G(v)} h(e)\leq b$ for any $v \in V(G)$, then the spanning subgraph with edge set $E_h=\{e \in E(G) \mid h(e)>0\}$, denoted by $G\left[E_h\right]$, is called a \textit{fractional $[a, b]$-factor} of $G$ with indicator function $h$. Note that if $G$ contains an $[a,b]$-factor, then it also contains a fractional $[a,b]$-factor. As a consequence of Theorem \ref{thm::1.1}, we have the following result.

\begin{thm}
Let $a$ and $b$ be two positive integers with $b>a$, and let $G$ be a connected graph of order $n \geq 2(a+b+2)(b+2)$ with minimum degree $\delta(G)\geq a$. If
		$$\rho(G) \geq \rho(H_{n}^{a,b}),$$
then $G$ contains a fractional $[a,b]$-factor, unless $G\cong H_{n}^{a,b}$.
	\end{thm}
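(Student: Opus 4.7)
My plan is to derive the theorem as an immediate corollary of Theorem~\ref{thm::1.1}. The key trivial observation is that every integer $[a,b]$-factor yields a fractional $[a,b]$-factor: given an $[a,b]$-factor $F$ of $G$, define the indicator function $h:E(G)\to\{0,1\}$ by $h(e)=1$ if $e\in E(F)$ and $h(e)=0$ otherwise. Then for every $v\in V(G)$,
$$a\leq d_F(v)=\sum_{e\in E_G(v)}h(e)\leq b,$$
and $E_h=E(F)$, so $G[E_h]=F$ is exactly a fractional $[a,b]$-factor with indicator $h$, in the sense of the definition given in the concluding remarks.

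Since the hypotheses of the present theorem coincide verbatim with those of Theorem~\ref{thm::1.1} (connectedness, $n\geq 2(a+b+2)(b+2)$, $\delta(G)\geq a$, $b>a\geq 1$, and $\rho(G)\geq \rho(H_n^{a,b})$), I would simply invoke Theorem~\ref{thm::1.1} on $G$. One of two things occurs: either $G\cong H_n^{a,b}$, which is the allowed exception, or $G$ contains an integer $[a,b]$-factor, which by the preceding paragraph is automatically a fractional $[a,b]$-factor. No additional work is required, since all of the spectral extremal analysis, the application of Tutte's criterion via Lemma~\ref{lem::2.3}, and the edge-switching arguments via Lemma~\ref{lem::2.0} have already been carried out in the proof of Theorem~\ref{thm::1.1}; consequently there is essentially no obstacle in this corollary.

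As a sanity check on the necessity of the ``unless'' clause, one can observe that $H_n^{a,b}\in\mathcal{G}_n^{a,b}$ admits no fractional $[a,b]$-factor, a fact the authors record in the concluding remarks. (For a direct verification, taking $S=V(K_a)$ and $W=V((b+1)K_1)$ in $H_n^{a,b}$ gives $a|W|-\sum_{v\in W}d_{G-S}(v)=a(b+1)-(a-1)=ab+1>ab=b|S|$, which violates the Tutte-type criterion used for both the integer and fractional versions of the problem.) Thus the exception cannot be dropped, and the statement is the strongest possible form compatible with Theorem~\ref{thm::1.1}.
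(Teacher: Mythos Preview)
Your proposal is correct and takes essentially the same approach as the paper: the authors simply state this theorem as a consequence of Theorem~\ref{thm::1.1}, having already noted that any (integer) $[a,b]$-factor is in particular a fractional $[a,b]$-factor. Your added sanity check that $H_n^{a,b}$ has no fractional $[a,b]$-factor is also consistent with what the paper records in the concluding remarks.
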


	\section*{Declaration of competing interest}
	The authors reported no potential competing interest.

\end{document}